\newcommand{\F}{\mathbb{F}}
\newcommand{\nd}{\text{ and }}
\newcommand{\Span}{\mathrm{Span}}
\renewcommand{\epsilon}{\varepsilon} 
\renewcommand{\leq}{\leqslant} 
\renewcommand{\geq}{\geqslant}
\newtheorem{theorem}{Theorem}[section]
\newtheorem*{theorem*}{Theorem}
\newtheorem{conjecture}[theorem]{Conjecture}
\newtheorem{proposition}[theorem]{Proposition}
\newtheorem*{proposition*}{Proposition}
\newtheorem*{lemma*}{Lemma}
\newtheorem*{corollary*}{Corollary}
\newtheorem*{example*}{Example}
\newtheorem*{claim*}{Claim}
\theoremstyle{definition}
\newtheorem*{definition*}{Definition}
\theoremstyle{remark}
\newtheorem*{remark*}{Remark}
\newtheorem*{question*}{Question}
\numberwithin{equation}{section}
\title{A counterexample to a strong variant of the Polynomial Freiman-Ruzsa conjecture}
\author{James Aaronson}
\email{james.aaronson@maths.ox.ac.uk}
\date{}
\begin{document}

\begin{abstract}
Let $p$ be a prime. One formulation of the Polynomial Freiman-Ruzsa conjecture over $\mathbb{F}_p$ can be stated as follows. If $\phi : \mathbb{F}_p^n  \rightarrow \mathbb{F}_p^N$ is a function such that $\phi(x+y) - \phi(x) - \phi(y)$ takes values in some set $S$, then there is a linear map $\tilde{\phi} : \mathbb{F}_p^n  \rightarrow \mathbb{F}_p^N$ with the property that $\phi - \tilde{\phi}$ takes at most $|S|^{O(1)}$ values.

A strong variant of this conjecture states that, in fact, there is a linear map $\tilde{\phi}$ such that $\phi - \tilde{\phi}$ takes values in $tS$ for some constant $t$. In this note, we discuss a counterexample to this conjecture.
\end{abstract}

\maketitle

\section{Introduction}\label{sec:intro}

Let $A \subseteq \F_p^n$ be a subset, with $|A+A| \leq K|A|$. The Polynomial Freiman-Ruzsa conjecture, attributed to Marton, asserts that $A$ can be covered by at most $K^{O(1)}$ translates of a subspace of size at most $K^{O(1)} |A|$. 

An equivalent form of the Polynomial Freiman-Ruzsa conjecture may be stated as follows \cite{ben_article}:
\begin{conjecture}\label{conj:PFR}
Suppose that $\phi : \mathbb{F}_p^n  \rightarrow \mathbb{F}_p^N$ is a function such that $\phi(x+y) - \phi(x) - \phi(y)$ takes values in some set $S$. 

Then there is a linear map $\tilde{\phi} : \mathbb{F}_p^n  \rightarrow \mathbb{F}_p^N$ with the property that $\phi - \tilde{\phi}$ takes at most $|S|^{O(1)}$ values.
\end{conjecture}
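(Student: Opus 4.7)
The plan is to derive Conjecture~\ref{conj:PFR} from the set-form of PFR stated at the start of the introduction, by passing to the graph of $\phi$. This is the standard equivalence attributed to \cite{ben_article}, and the natural approach in the absence of a known reference.

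First I would consider the graph $\Gamma = \{(x, \phi(x)) : x \in \F_p^n\} \subset \F_p^{n+N}$ and bound its doubling. For any $x, y \in \F_p^n$,
\[
    (x, \phi(x)) + (y, \phi(y)) = (x+y, \phi(x+y)) - (0, s),
\]
where $s = \phi(x+y) - \phi(x) - \phi(y) \in S$. Hence $\Gamma + \Gamma \subset \Gamma + (\{0\} \times (-S))$, so $|\Gamma + \Gamma| \leq |S| \cdot |\Gamma|$. Applying the set-form of PFR with $K = |S|$ produces a subspace $H \subset \F_p^{n+N}$ of size $|H| \leq K^{O(1)} p^n$ such that $\Gamma$ is covered by at most $K^{O(1)}$ cosets of $H$.

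Next I would extract a linear map from $H$. Let $\pi : \F_p^{n+N} \to \F_p^n$ be the projection onto the first factor. Since $\pi(\Gamma) = \F_p^n$ is covered by $K^{O(1)}$ translates of $\pi(H)$, we get $|\pi(H)| \geq p^n / K^{O(1)}$. By adjoining a complementary subspace $U \times \{0\}$ with $U \oplus \pi(H) = \F_p^n$, we may replace $H$ by a subspace $H'$ with $\pi(H') = \F_p^n$, inflating $|H|$ by at most a further factor of $K^{O(1)}$. Any such $H'$ can be written as $\{(x, \psi(x) + w) : x \in \F_p^n,\, w \in W\}$ for a linear map $\psi : \F_p^n \to \F_p^N$ and a subspace $W \subset \F_p^N$ with $|W| = |H'|/p^n \leq K^{O(1)}$. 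Setting $\tilde\phi := \psi$, for each $x$ there is a translate $(a_i, b_i)$ with $(x, \phi(x)) \in H' + (a_i, b_i)$, which yields $\phi(x) - \tilde\phi(x) \in (b_i - \tilde\phi(a_i)) + W$. Since there are $K^{O(1)}$ choices of $i$ and $|W| \leq K^{O(1)}$, the difference $\phi - \tilde\phi$ takes at most $|S|^{O(1)}$ values, as required.

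The only genuinely hard input is the set-form of PFR itself, which is the main obstacle and the object of the broader research programme. The reduction is otherwise fairly routine once one thinks to work with the graph $\Gamma$; the only care required is to ensure $\pi(H) = \F_p^n$, handled by the complementary-subspace enlargement above. It is worth noting, in light of the paper's theme, that this argument only controls the \emph{cardinality} of the image of $\phi - \tilde\phi$, not where that image sits inside $\F_p^N$, which is precisely why the strong variant demanding $\phi - \tilde\phi \in tS$ is not accessible by this route.
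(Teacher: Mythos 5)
The paper does not prove Conjecture~\ref{conj:PFR}: it is stated as a conjecture, with the equivalence to the set/covering formulation attributed to Green's survey \cite{ben_article}, and plays the role of background motivation for the counterexample to the strong variant. So there is no internal proof to compare against. What you have given is not a proof of the conjecture but a deduction of the functional form from the (then equally conjectural) set form; you are upfront about this, and the deduction is correct. In particular: the graph $\Gamma$ has $|\Gamma + \Gamma| \leq |S|\,|\Gamma|$ for the reason you give; applying set-PFR with $K = |S|$ yields $H$ with $|H| \leq K^{O(1)} p^n$ and a covering of $\Gamma$ by $K^{O(1)}$ cosets; your enlargement to $H'$ by adjoining a complement of $\pi(H)$ in $\F_p^n$ costs only another $K^{O(1)}$ factor and makes $\pi|_{H'}$ surjective, so $H'$ splits as $\{(x, \psi(x) + w)\}$ with $W = H' \cap (\{0\} \times \F_p^N)$ of size $|H'|/p^n \leq K^{O(1)}$; and each coset of $H'$ contributes a single coset of $W$ to the set of values of $\phi - \psi$, for a total of $K^{O(1)}$ values. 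Your closing remark is also apt: this reduction controls only the \emph{number} of values of $\phi - \tilde\phi$, not whether those values lie in $tS$, which is precisely the distinction that Theorem~\ref{thm:farah_constr} exploits to refute Conjecture~\ref{conj:strong_PFR}.
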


It is tempting to consider the following strong form of Conjecture \ref{conj:PFR}:
\begin{conjecture}\label{conj:strong_PFR}
Suppose that $\phi : \mathbb{F}_p^n  \rightarrow \mathbb{F}_p^N$ is a function such that $\phi(x+y) - \phi(x) - \phi(y)$ takes values in some set $S$. 

Then there is a linear map $\tilde{\phi} : \mathbb{F}_p^n  \rightarrow \mathbb{F}_p^N$ with the property that $\phi - \tilde{\phi}$ takes values in $tS$, for some constant $t$ depending only on $K$.
\end{conjecture}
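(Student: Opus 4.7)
The plan is to disprove Conjecture~\ref{conj:strong_PFR} by an explicit construction. The guiding observation is that, while the weak form (Conjecture~\ref{conj:PFR}) permits the range of $\phi - \tilde\phi$ to be an arbitrary subset of size $|S|^{O(1)}$, the strong form insists that this range be the Minkowski iterate $tS$. By Pl\"unnecke--Ruzsa, $|tS|$ is also polynomial in $|S|$, but its combinatorial shape is much more restrictive. The goal is therefore to find a $\phi$ whose natural nonlinear part has polynomial-in-$|S|$ range yet fits into no $tS$ for bounded $t$.

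First I would fix a convenient form: take $\phi : \F_p^n \to \F_p^n \oplus \F_p^m$ with $\phi(x) = (x, \psi(x))$, so that the defect equals $(0, \psi(x+y) - \psi(x) - \psi(y))$ and $S \subseteq \{0\} \times T$ where $T$ is the defect set of $\psi$. Any linear $\tilde\phi$ satisfying the conclusion must then be of the form $\tilde\phi(x) = (x, L(x))$, reducing the question to whether some linear $L : \F_p^n \to \F_p^m$ has $\psi - L$ taking all values in $tT$. For $\psi$, I would try a sum of simple approximately-linear building blocks, for instance $\psi(x) = \sum_i \1[x \in H_i]\, v_i$ for hyperplanes $H_i \subset \F_p^n$ and vectors $v_i \in \F_p^m$. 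Each indicator has defect in $\{-1,0,1\}$, and by imposing appropriate linear relations on the $v_i$ one can hope to keep the overall defect set $T$ small, while choosing the $H_i$ in sufficiently general position so that the range of $\psi$ is combinatorially rich.

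The main obstacle is certifying that no linear $L$ absorbs $\psi - L$ into $tT$ for bounded $t$. Conditionally on Conjecture~\ref{conj:PFR}, the range of $\psi - L$ for optimal $L$ is bounded by $|T|^{O(1)}$, so the construction must be \emph{tight} for weak PFR yet have the ``wrong shape'' for strong PFR. I expect the verification to proceed by contradiction: the condition $\psi - L \in tT$ imposes strong algebraic constraints on $L$ through the incidence data of the $H_i$, and one then exhibits a single input $x$ violating the required containment. A robust fallback would be a counting argument, comparing the number of valid $\phi$ (defect in $S$) against the number of pairs $(\tilde\phi, r)$ where $\tilde\phi$ is linear and $r$ takes values in $tS$; if the former much exceeds the latter, the existence of a counterexample follows by pigeonhole.
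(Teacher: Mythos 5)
Your high-level plan --- refuting the conjecture by exhibiting an explicit $\phi$ --- is the right one, but both steps that would make it a proof are missing, and the placeholders you offer do not obviously close. You never fix the hyperplanes $H_i$, the vectors $v_i$, or even the codomain dimension $m$, so what you have is a template rather than an example; and you explicitly defer the crux (``I expect the verification to proceed by contradiction''), which is precisely the content one needs. There is also a real tension in the template: keeping $T = \{\psi(x+y)-\psi(x)-\psi(y)\}$ small forces heavy linear cancellation among the $v_i$, which tends to make $\psi$ itself nearly linear or of small range, so it is not clear that ``small defect'' and ``no linear $L$ absorbs $\psi - L$ into $tT$'' can coexist in your family without a careful argument you have not supplied. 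The counting fallback is similarly unfinished: you would need the number of $\phi$ with defect in $S$ to exceed $p^{nN}\,|tS|^{p^n}$, i.e.\ the number of defect-$S$ functions modulo linear maps to exceed $|tS|^{p^n}$, and you give no lower bound of that kind.

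The missing idea, which the paper supplies, is to make the construction \emph{universal} rather than hand-built. Take $N = p^{p^n}$ and identify $\F_p^N$ with functions $f : \F_p^n \to \F_p$, with $\phi(v)$ the evaluation functional $f \mapsto f(v)$. Then $S$ is exactly the set of finite-difference functionals $f \mapsto f(a+b)-f(a)-f(b)$, and the hypothesis $\phi(x) - \tilde\phi(x) \in tS$ attaches to each $x$ a subspace $V_x = \Span(x, a_i^{(x)}, b_i^{(x)})$ of dimension at most $2t+1$ containing $x$, with the property that $f|_{V_x}$ linear forces $\phi(x)(f) = \tilde\phi(x)(f)$. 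A pigeonhole/doubling argument then produces $x, y$ with $x+y \notin (V_x \cap V_{x+y}) + (V_y \cap V_{x+y})$ once $n \geq 12t+7$, and a single test function $f$ (zero on $V_x \cup V_y$, linear and nonzero at $x+y$ on $V_{x+y}$) contradicts the linearity of $\tilde\phi$. This converts the hard verification in your sketch into a concrete finite-dimensional subspace problem, which is the step your proposal leaves open.
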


In \cite[Theorem 3.3]{farah_paper}, Farah gives an example which refutes Conjecture \ref{conj:strong_PFR}. Green and Tao give another example in \cite[Section 1.17]{ben_tao_example} for the case $p = 2$. However, their example relies crucially on the fact that $p = 2$, whereas Farah's example does not require this.


In this note, we offer an alternative approach to that of Farah. In particular, our proof gives quite reasonable quantitative bounds.

\section{The construction}\label{sec:construction}

The construction goes as follows.
\begin{theorem}\label{thm:farah_constr}
Given $t$, suppose that $n \geq 12t + 7$. View $\F_p^N$ as the space of all (not necessarily linear) maps from $\F_p^n$ to $\F_p$ (in particular, $N = p^{p^n}$).

Let $\phi : \F_p^n \rightarrow \F_p^N$ be defined as follows. If $v$ is an element of $\F_p^n$, then define $\phi(v) : \F_p^{\F_p^n} \rightarrow \F_p$ by
\[
\phi(v) : f \mapsto f(v),
\]
where $f : \F_p^n \rightarrow \F_p$.

Let $S \subseteq \F_p^N$ denote the set of values taken by $\phi(x+y) - \phi(x) - \phi(y)$. Then, there is no linear map $\tilde{\phi}$ such that $\phi - \tilde{\phi}$ takes values in $tS$.
\end{theorem}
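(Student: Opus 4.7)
The plan is to translate the hypothesis into concrete vector-space language and then analyse the essential support of $\tilde{\phi}$. First, I would identify $\F_p^N$ with $\F_p^{\F_p^n}$ via the natural pairing: the image $\phi(\F_p^n) = \{\1_v : v \in \F_p^n\}$ is linearly independent and spans a $p^n$-dimensional subspace in which all of $tS$ also lies, so any admissible $\tilde{\phi}$ must land there as well. Under this identification $\phi(v) = \1_v$, the set $S$ becomes $\{\delta_{x,y} : x, y \in \F_p^n\}$ with $\delta_{x, y} := \1_{x+y} - \1_x - \1_y$, and any linear $\tilde{\phi}$ takes the form $\tilde{\phi}(v) = \sum_w L(v, w) \1_w$ for an $L$ that is $\F_p$-linear in its first argument. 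The hypothesis becomes: for each $v$ there exist $(x_i(v), y_i(v))_{i=1}^t$ with $\1_v - \tilde{\phi}(v) = \sum_i \delta_{x_i(v), y_i(v)}$, and I want a contradiction.

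Next I would bound the essential support $W := \{w : L(\cdot, w) \not\equiv 0\}$. Every element of $tS$ has at most $3t$ non-zero coordinates in $\F_p^{\F_p^n}$. For $v \notin W$ lying off every hyperplane $\{L(\cdot, w) = 0\}_{w \in W}$, the support of $\1_v - \tilde{\phi}(v)$ is exactly $\{v\} \cup W$, forcing $|W| \leq 3t - 1$. I would then restrict further to $v$ avoiding a bad set $B \subseteq \F_p^n$ containing $W$, $W + W$, $W - W$, $\{w/2 : w \in W\}$, and similar small combinations (of total size polynomial in $t$): for such $v$, the only Schur triples $\{x, y, x+y\} \subseteq W \cup \{v\}$ lie entirely inside $W$, so no internal triple covers $v$. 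Since $\1_v - \tilde{\phi}(v)$ has non-zero coefficient at $v$, the decomposition must employ at least one triple whose third slot is an element $u \notin W \cup \{v\}$; any such external $u$ must occupy at least two triple-slots, because its counts as $x_i + y_i$ and as $x_i$ or $y_i$ must agree mod $p$. Slot-counting then gives $|W| + 1 + 2E \leq 3t$ where $E \geq 1$ is the number of external elements, hence $|W| \leq 3t - 3$.

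Iterating this step---selecting $v$ from progressively refined complements $\F_p^n \setminus B_k$ so that every valid decomposition must use $E \geq k$ external elements---drives $|W|$ further down, and eventually rules out every linear $\tilde{\phi}$. The hypothesis $n \geq 12t + 7$ is what is needed for $p^n$ to exceed the size of the iterated bad sets at every stage, ensuring that very-generic $v$'s always exist.

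The main obstacle will be this iterative step: when two or more external elements are shared among the $t$ triples, they form \emph{chains} whose net contribution to $\1_v - \tilde{\phi}(v)$ is more intricate than a sum of individual triples. One must track these chain configurations---how many slots they consume and which $v$'s they can cover---carefully enough that the slot-counting bound remains tight and that the linearity of $\tilde{\phi}$ forbids a consistent assignment of $(x_i(v), y_i(v))$ across the rich family of very-generic $v$'s.
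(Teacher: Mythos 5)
Your approach is genuinely different from the paper's. The paper works entirely with the subspaces $V_x = \Span(x, a_1^{(x)}, b_1^{(x)}, \dots, a_t^{(x)}, b_t^{(x)})$ associated to the decompositions of $\phi(x) - \tilde{\phi}(x)$; the key observation is that if a test function $f$ is \emph{linear on $V_x$} then $\phi(x)(f) = \tilde{\phi}(x)(f)$, and the contradiction is produced by a pigeonhole argument that finds $x, y$ with $x+y \notin (V_x \cap V_{x+y}) + (V_y \cap V_{x+y})$, after which a suitable $f$ is constructed by hand. Your proposal instead identifies $\phi(v)$ with $\1_v$, expands $\tilde{\phi}(v) = \sum_w L(v,w)\1_w$, and attempts to argue purely about the coordinate support $W$ of $L$. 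This is a legitimately different attack and, if it worked, would arguably require $n$ to be only logarithmic rather than linear in $t$; but as written it does not go through, for the following reasons.

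First, the base step has a circularity. You want a $v$ that avoids $W$ and lies off every hyperplane $\{L(\cdot,w)=0\}$, $w\in W$, so that $\mathrm{supp}\bigl(\1_v - \tilde{\phi}(v)\bigr) = \{v\}\cup W$ and hence $|W|\leq 3t-1$. But such a $v$ exists only when the union of those $|W|$ hyperplanes fails to cover $\F_p^n$, which (in the worst case of parallel hyperplanes) needs $|W| < p$ --- a bound you are trying to prove, not one you have. An averaging argument does rescue a weaker statement (summing $|\mathrm{supp}(\tilde{\phi}(v))|$ over $v$ gives $|W|\cdot p^n(1-1/p) \leq (3t+1)p^n$, hence $|W| \leq 6t+2$), but you would then still need genericity of $v$ for the later steps, and that again runs into the hyperplane-covering problem when $p \leq |W|$.

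Second, and more fundamentally, the iteration is not a proof but an acknowledged hope. You yourself flag the ``chains'' obstacle: once two or more external elements are shared among triples, the net contributions interlock, and the slot bound $|W| + 1 + 2E \leq 3t$ ceases to be a clean lower bound (slots can be spent on degenerate triples like $\delta_{w,0} = -\1_0$, on repetitions, or on internal cancellations among elements of $W$). You do not say how the bad sets $B_k$ are defined beyond the first stage, how the slot accounting survives chains, nor what the terminal contradiction is (note that driving $|W|$ to $0$ only reduces you to $\tilde{\phi}=0$, which still has to be ruled out separately). Without resolving these, the argument does not close. The paper's proof sidesteps all of this combinatorial bookkeeping: it never needs to understand the structure of the decompositions, only the dimension of the spaces they span, and the pigeonhole step gives the required pair $x,y$ directly once $n \geq 12t+7$.
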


\begin{proof}
Suppose that there does exist $\tilde{\phi}$ such that $\phi - \tilde{\phi}$ takes values in $tS$.

An element of $S$ may be written $\phi(a+b) - \phi(a) - \phi(b)$ for some $a, b \in \F_p^n$. Thus, for each $x \in \F_p^n$, there are pairs $(a_i^{(x)}, b_i^{(x)})$ for $1 \leq i \leq t$ with the property that
\begin{equation}\label{eqn:phi_decomp}
(\phi(x) - \tilde{\phi}(x))(f) = \sum_{i=1}^t f(a_i^{(x)} + b_i^{(x)}) - f(a_i^{(x)}) - f(b_i^{(x)}) 
\end{equation}
for every map $f : \F_p^n \rightarrow \F_p.$

For each $x \in \F_p^n$, define $V_x = \Span(x, a_1^{(x)}, b_1^{(x)}, \dots, a_t^{(x)}, b_t^{(x)})$. $V_x$ obeys the following three properties:
\begin{enumerate}
    \item $V_x$ has dimension at most $2t + 1$.
    
    \item $x \in V_x$.
    
    \item Suppose that $f : \F_p^n \rightarrow \F_p$ is a map such that $\phi(x)(f) \neq \tilde{\phi}(x)(f)$. Then the restriction $f|_{V_x}$ is nonlinear.
\end{enumerate}
The first two properties follow trivially. To see why the third holds, suppose that $\phi(x)(f) \neq \tilde{\phi}(x)(f)$. By (\ref{eqn:phi_decomp}), we learn that
\[
\sum_{i=1}^t f(a_i^{(x)} + b_i^{(x)}) - f(a_i^{(x)}) - f(b_i^{(x)}) \neq 0.
\]
Thus, at least one term in the sum must be zero; from that term, we have that 
\[ f(a_i^{(x)} + b_i^{(x)}) \neq f(a_i^{(x)}) + f(b_i^{(x)}). \]
However, all three of the arguments to $f$ above are in $V_x$, so $f|_{V_x}$ must be nonlinear.

Now, in order to find a contradiction, observe that it suffices to find a pair $x, y \in \F_p^n$ such that 
\begin{equation}\label{eqn:condition_on_xy}
x+y \notin (V_x \cap V_{x+y}) + (V_y \cap V_{x+y}).    
\end{equation}
To see why, construct a function $f : \F_p^n \rightarrow \F_p$ as follows: 
\begin{itemize}
    \item Set $f|_{V_x} = 0$ and $f|_{V_y} = 0$.
    
    \item Set $f(x+y) = 1$, and extend to a linear function on $V_{x+y}$. This is possible because of our condition on $x + y$.
    
    \item Define $f$ arbitrarily on $\F_p^n \backslash (V_x \cup V_y \cup V_{x+y})$.
\end{itemize}
Now, we have that
\begin{align*}
    0 = f(x) &= \phi(x)(f) = \tilde{\phi}(x)(f) \\
    0 = f(y) &= \phi(y)(f) = \tilde{\phi}(y)(f) \\
    1 = f(x+y) &= \phi(x+y)(f) = \tilde{\phi}(x+y)(f).
\end{align*}
In each line, the first equality holds by construction, the second follows from the definition of $\phi$ and the third follows from property (3) since $f|_{V_x}, f|_{V_y} \nd f|_{V_{x+y}}$ are linear. Thus $\tilde{\phi}(x+y) - \tilde{\phi}(x) - \tilde{\phi}(y)$ is not zero when evaluated at $f$, contradicting the linearity of $\tilde{\phi}$.

Thus, we will be done if we can establish the following:
\begin{proposition}
Suppose that for each $x \in \F_p^n$, we have a subspace $V_x$ of dimension at most $2t + 1$. Then, provided that $n \geq 12t + 7$, there must exist some pair $x, y$ satisfying (\ref{eqn:condition_on_xy}). 
\end{proposition}

\begin{proof}
Suppose that no such pair $x, y$ exist. Then, for each $x, z$ there exist $v_{x, y} \in V_x \cap V_z$ and $w_{x,z} \in V_{z-x} \cap V_{z}$, with $z = v_{x,z} + w_{x,z}$.

As $x$ runs over $\F_p^n$, $v_{x,z}$ takes values in $V_z$. Thus, by the pigeonhole principle, there must exist some choice $v_z \in V_z$ which occurs for at least $p^{n - 4t - 2}$ choices of $x$. 

If we define $U = \{v \in \F_p^n | v \in V_x \text{ for at least } p^{n - 4t - 2} \text{ choices of } x\}$, then we instantly learn that $v_z \in U$ for each $z \in \F_p^n$, since $v_z \in V_x$ for at least $p^{n - 4t - 2}$ choices of $x$. Similarly, $w_z \in U$, since $w_z \in V_{z - x}$. In view of the fact that $v_z + w_z = z$, we learn that $U + U$ is the whole of $\F_p^n$, and so $|U| \geq p^{n/2}$.

However, we can also give an upper bound for $|U|$. There are at most $p^{n + 2t + 1}$ pairs $v, x$ with $v \in V_x$, and each $v \in U$ must count at least $p^{n - 4t - 2}$ of those pairs. Thus, there can be at most $p^{6t + 3}$ elements of $U$.

Putting this together, we have that $p^{n/2} \leq |U| \leq p^{6t + 3}$, and so that $n \leq 12t + 6$.
\end{proof}
This gives us the required contradiction, and so such a linear function $\tilde{\phi}$ indeed cannot exist.
\end{proof}

\addtocontents{toc}{\protect\vspace*{\baselineskip}}


\addtocontents{toc}{\protect\vspace*{\baselineskip}}

\addcontentsline{toc}{section}{References}

\bibliographystyle{plainnat}

\end{document}